\documentclass[10pt,a4paper]{article}
\usepackage[latin1]{inputenc}
\usepackage{amssymb,mathtools,amsthm,amsmath,amsfonts, latexsym,verbatim,geometry,fontenc}
\newcommand{\Z}{\mathbb{Z}}

\usepackage[dvipsnames]{xcolor} 
\usepackage{geometry}

\date{}
\newtheorem{theorem}{Theorem}[section]
\newtheorem{corollary}{Corollary}[theorem]
\newtheorem{lemma}[theorem]{Lemma}

\newtheorem{definition}{Definition}
\title{On signs of eigenvalues of modular forms satisfying Ramanujan conjecture}
\author{Nagarjuna Chary Addanki}
\begin{document} 

\newcommand{\ds}[1]{\displaystyle{#1}}
\newcommand{\G}{\mathrm{GSp}}
\newcommand{\Sp}{\mathrm{Sp}}
\newcommand{\GL}{\mathrm{GL}}
\newcommand{\SL}{\mathrm{SL}}
\newcommand{\mf}[1]{\mathfrak{#1}}
\newcommand{\mb}[1]{\mathbb{#1}} 
\newcommand{\mr}[1]{\mathrm{#1}}
\newcommand{\M}{\mr{M}}
\maketitle
\section*{Introduction} 
Siegel modular forms of genus $n$ and weight $k$ of level $N$ are holomorphic functions on the Siegel upper half space $\mathbb{H}_n$ that satisfy the modularity condition with respect to congruence subgroups of $\Sp_{2n}(\mb{Q})$. We denote a congruence subgroup of genus $n$ and level $N$ by $\Gamma^{(n)}(N)$. Let $\M_k(\Gamma^{(n)}(N))$ denote the space of Siegel modular forms of weight $k$, genus $n$ over $\Gamma^{(n)}(N)$ and $\mr{S}_k(\Gamma^{(n)}(N))$ denote the subspace of cuspidal forms. The space of cusp forms has a special basis called Hecke eigenforms. They arise as eigenvectors with respect to operators called the Hecke operators. For each positive integer $m$ there is a Hecke operator associated to it, denoted by $T(m)$. For a Hecke eigenform $F$, let $\lambda_F(m)$ denote the eigenvalue of $T(m)$. For a normalised eigenform these eigenvalues are real. Hence the behavior of signs of the eigenvalues can be studied.

\cite[Theorem $5$]{MR2726725} proved that for two normalized Hecke eigenforms $F \in \mr{S}_{k_1}(\Gamma^{(1)}(N_1))$ and $G \in \mr{S}_{k_2}(\Gamma^{(1)}(N_2))$, if $\mr{sign}(\lambda_F(p^r))= \mr{sign}(\lambda_G(p^r))$ for almost all $p$ and $r$ then $F=G$. Thus two genus 1 modular forms can be compared by studying the signs of the eigenvalues. In case of genus $2$, the space $\mr{S}_k(\Gamma^{(2)}(1))$ decomposes into two subspaces, mutually orthogonal to each other. The first subspace is known as the Maass subspace and it is generated by Saito-Kurokawa lifts. Saito-Kurokawa lifts are modular forms of genus $2$ constructed using a form of genus $1$ as explained in \cite{MR0549401}. Breulmann, in \cite{MR1719682}, showed that $F \in S_k(\Gamma^{(2)}(1))$ is a Saito-Kurokawa lift if and only if $\lambda_F(m) >0$ for all $ m \geq 1$. Kohnen, in \cite{MR2262899}, showed that a Hecke eigenform $ F \in S_k(\Gamma^{(2)}(1))$ is in the orthogonal complement of the Maass space if and only if there are infinitely many sign changes in the sequence $\{ \lambda_F(m) \}_{  m \geq 1}$. These results underscore the significance of analyzing the signs of Hecke eigenvalues. In this article, we focus on the eigenvalues of the modular forms of genus $2$ with level. Ikeda lifts, which are generalizations of the Saito-Kurokawa lifts to a higher genus, show a similar property. In \cite{addanki2024} we proved that for a genus $4$ Ikeda lift $F$, for a fixed $r$ $\lambda_F(p^r) \geq 0$ for all sufficiently large $p.$

Pitale and Schmidt in \cite{MR2425722} proved that, for a $F \in \mr{S}_k(\Gamma^{(2)}_0(N))$ and in the orthogonal compliment of the Maass subspace, there are infinitely many prime numbers $p$ such that the sequence of Hecke eigenvalues $\{ \lambda_F(p^r) \}_{r \geq 1} $ has infinitely many sign changes. Theorem $4$ of \cite{MR4198744} proves that, under a specific condition, if $F \in S_{k_1}(\Gamma^{(2)}(1))$ and $G \in S_{k_2}(\Gamma^{(2)}(1))$ are in orthogonal complement of their respective Maass subspaces then for a set of primes of positive density, $\lambda_F(p)\lambda_G(p) <0$. In this article, we use the techniques used in \cite{MR4198744} to prove a similar result for Siegel modular forms with level that satisfy the Ramanujan conjecture. The main result is

\begin{theorem}
Let $F \in \mr{S}_{k_1}(\Gamma^{(2)}(N_1))$ and $G \in \mr{S}_{k_2}(\Gamma^{(2)}(N_2))$ be two Hecke eigenforms that satisfy the Ramanujan conjecture. Let $\pi_F$ and $\pi_G$ be cuspidal automorphic representations of $\G_4(\mb{A_Q})$ associated with $F$ and $G$ respectively. Assume that if 
$$L(s,\pi_F,\mr{spin})=L(s,\pi_1)L(s,\pi_2) \ and \ L(s,\pi_G,\mr{spin})=L(s,\tau_1)L(s,\tau_2)$$ for some cuspidal automorphic representations $\pi_1, \pi_2, \tau_1$ and $\tau_2$ over $\GL_2(\mb{A_Q})$ then all representations are pairwise non isomorphic. Also, assume that for some $ c \in (0,4)$ and $\alpha > 15/16,$
$$ \# \{ p \leq x : \lvert \lambda_G(p) \rvert > c \} \geq \alpha \frac{x}{\log{x}} $$ for sufficiently large $x$. Then the set of primes $ \{p : \lambda_F(p)\lambda_G(p) < 0 \}$ has a positive density. 
\end{theorem}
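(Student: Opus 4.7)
The plan is to translate the statement into asymptotics of Dirichlet series as $s\to 1^{+}$ and to deduce a positive Dirichlet density for the set $P^{-}:=\{p : \lambda_F(p)\lambda_G(p)<0\}$. First, I would use the factorization hypothesis and multiplicativity of Rankin--Selberg to write
\[
L(s,\pi_F\times\pi_G,\mr{spin}\otimes\mr{spin}) \;=\; \prod_{i,j=1}^{2} L(s,\pi_i\times\tau_j),
\]
and observe that, since the four $\mathrm{GL}_2$ representations are pairwise non-isomorphic, every factor on the right is entire. Taking logarithms and absorbing the prime-power contribution (which is $O(1)$ under Ramanujan) forces $\sum_p \lambda_F(p)\lambda_G(p)\,p^{-s}=O(1)$ as $s\to 1^{+}$. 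The analogous decomposition of $L(s,\pi_F\times\pi_F,\mr{spin}\otimes\mr{spin})$ exhibits two diagonal factors $L(s,\pi_i\times\pi_i)=\zeta(s)L(s,\mr{sym}^{2}\pi_i)$, each with a simple pole at $s=1$, yielding $\sum_p \lambda_F(p)^{2}p^{-s}\sim 2\log\tfrac{1}{s-1}$; symmetrically $\sum_p \lambda_G(p)^{2}p^{-s}\sim 2\log\tfrac{1}{s-1}$.

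Next, I would convert this $\ell^{2}$ information into an $\ell^{1}$ lower bound. Ramanujan gives $|\lambda_F(p)|\le 4$, hence $|\lambda_F|^{3}\le 4\lambda_F^{2}$, so Cauchy--Schwarz in the form $\bigl(\sum_p\lambda_F^{2}p^{-s}\bigr)^{2}\le \bigl(\sum_p|\lambda_F|p^{-s}\bigr)\bigl(\sum_p|\lambda_F|^{3}p^{-s}\bigr)$ yields $\sum_p|\lambda_F(p)|p^{-s}\ge \tfrac{1}{2}\log\tfrac{1}{s-1}+O(1)$. Removing primes with $|\lambda_G(p)|\le c$, which by the density hypothesis contribute at most $4(1-\alpha)\log\tfrac{1}{s-1}$, gives
\[
\sum_{|\lambda_G(p)|>c}|\lambda_F(p)|\,p^{-s}\;\ge\;\bigl(\tfrac{1}{2}-4(1-\alpha)\bigr)\log\tfrac{1}{s-1}+O(1),
\]
and the assumption $\alpha>15/16$ makes the coefficient strictly positive. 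Multiplying by $c$ then bounds $\sum_p|\lambda_F(p)\lambda_G(p)|\,p^{-s}$ from below by $c'\log\tfrac{1}{s-1}$ for some $c'>0$.

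To conclude, I would split this sum along $P^{\pm}:=\{p:\pm\lambda_F(p)\lambda_G(p)>0\}$. Boundedness of $\sum_p\lambda_F\lambda_G\,p^{-s}$ forces the two contributions to cancel up to $O(1)$, so $\sum_{p\in P^{-}}|\lambda_F\lambda_G|\,p^{-s}\ge (c'/2)\log\tfrac{1}{s-1}+O(1)$; since $|\lambda_F\lambda_G|\le 16$, this yields $\sum_{p\in P^{-}}p^{-s}\ge (c'/32)\log\tfrac{1}{s-1}+O(1)$, i.e.\ positive Dirichlet density for $P^{-}$, whence positive natural density follows by a standard Wiener--Ikehara type argument. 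The main obstacle will be the analytic input behind the key asymptotic $\sum_p\lambda_F(p)^{2}p^{-s}\sim 2\log\tfrac{1}{s-1}$: this requires non-vanishing on $\mr{Re}(s)=1$ of $L(s,\mr{sym}^{2}\pi_i)$ and of the off-diagonal $L(s,\pi_i\times\pi_j)$, and, more delicately than in the level-one setting of \cite{MR4198744}, a uniform treatment of bad primes and prime-power contributions at the levels $N_1,N_2$ so that the Ramanujan-based $O(1)$ bookkeeping is genuinely absorbable into the error terms.
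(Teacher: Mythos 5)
Your outline is a recognizable Dirichlet-series translation of the paper's argument, and the core ingredients match: Rankin--Selberg orthogonality of the $\GL_2$ (or $\GL_4$) constituents supplies the mean-square asymptotic and the vanishing of the cross term $\sum_p\lambda_F(p)\lambda_G(p)$, while the density hypothesis on $|\lambda_G(p)|>c$ combined with the Ramanujan bound $|\lambda_F(p)|\le 4$ produces the positive main term, with the same source for the threshold $15/16$. The execution differs in two ways. The paper works with sharp summatory functions $\sum_{p\le x}$ via the prime number theorem for Rankin--Selberg $L$-functions of Wu--Ye (\cite{MR2364718}, Theorem 3), obtaining $\sum_{p\le x}\lambda_F(p)^2=m\,x/\log x+o(x/\log x)$ and $\sum_{p\le x}\lambda_F(p)\lambda_G(p)=o(x/\log x)$ directly; and its endgame is the quadratic trick $S^-(x)=\sum_p\lambda_F(p)\lambda_G(p)\bigl(\lambda_F(p)\lambda_G(p)-16\bigr)$, which is nonpositive at every prime with $\lambda_F(p)\lambda_G(p)>0$, hence bounded above by $512\,\#\{p\le x:\lambda_F(p)\lambda_G(p)<0\}$, while being bounded below by $c^2(16\alpha+m-16)\,x/\log x$. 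Your endgame (lower-bounding $\sum_p|\lambda_F(p)\lambda_G(p)|p^{-s}$ and splitting along $P^{\pm}$) is a workable alternative that avoids the fourth-moment sum $\sum\lambda_F^2\lambda_G^2$ altogether.

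Two points need repair. First, your case analysis silently assumes both forms are of Yoshida type: the factorization $\prod_{i,j}L(s,\pi_i\times\tau_j)$ and the constant $2$ in $\sum_p\lambda_F(p)^2p^{-s}\sim 2\log\frac{1}{s-1}$ hold only in class $\textbf{Y}$. In class $\textbf{G}$ the spin $L$-function is a single $\GL_4$ $L$-function, the mean square is $\sim\log\frac{1}{s-1}$, and your coefficient becomes $\frac14-4(1-\alpha)$; this is exactly where $\alpha>15/16$ (rather than $7/8$) is required, so the case you omitted is the one that forces the stated hypothesis. You must also run the mixed $\GL_4\times\GL_2$ and $\GL_4\times\GL_4$ Rankin--Selberg cases for the cross term, as the paper's Lemma 3.2 does. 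Second, your argument concludes with a positive lower \emph{Dirichlet} density for $P^-$, which yields positive upper natural density but not the lower bound $\#\{p\le x:\lambda_F(p)\lambda_G(p)<0\}\gg x/\log x$ that the paper proves; the appeal to a Wiener--Ikehara argument does not apply to the indicator function of a set cut out by a sign condition, since no analytic continuation of $\sum_{p\in P^-}p^{-s}$ is available. To obtain the natural-density statement, replace the $s\to 1^{+}$ asymptotics by their summatory versions, which is precisely what \cite{MR2364718} provides.
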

The main result is on the signs of $\lambda_F(p)$. When the local factor of the spin L-function of a Hecke eigenform $F$ is written as a Dirichlet series, the coefficient of $p^{-s}$ is the eigenvalue $\lambda_F(p)$. Hence, to study the properties of $\lambda_F(p)$ it is sufficient to study the coefficient $p^{-s}$ of the L-function. We extensively use the prime number theorem stated as Theorem 3 of \cite{MR2364718} for asymptotic behavior of the coefficients.

\textbf{Outline of the paper:} In the first section of the article, we talk about basics of Siegel modular forms, automorphic representation associated with a modular form, and give a brief description of representations associated to the modular forms satisfying the Ramanujan conjecture. This section gives a description of the different types of L-functions to be expected for a eigenform satisfying the Ramanujan conjecture. In Section $2$, we show the relation between the eigenvalue $\lambda_F(p)$ and the coefficient of $p^{-s}$ of the L-function. Using Theorem $3$ of \cite{MR2364718}, we prove few technical results that would be used for the main result. In the final section, we prove the main result and explain the assumptions made in the theorem.

\section{Automorphic representations} 

For any ring $R$, let$$\G_{2n}(R)  =  \Big\{   g = \begin{pmatrix}
A & B\\
C & D 
\end{pmatrix} \in \GL_{2n}(R) : \prescript{t}{}{g} J g = \mu(g) J, \ J = \begin{pmatrix}
0 & 1_n\\
-1_n & 0 
\end{pmatrix}  \Big\} $$ where $\mu$ is the similitude homomorphism, $1_n$ is identity matrix of size $n$ and $A,B,C,D \in M_n(R)$. 
$$ \Sp_{2n}(R) \coloneqq \{ g \in \G_{2n}(R) :  \mu(g) = 1 \}  .$$ Let $N$ be a positive integer. Principal congruence subgroup of level $N$ and genus $n$ is defined to be the subgroup
$$ \{ g \in \Sp_{2n}(\mb{Z}) : g \equiv 1_{2n}(\mr{mod} \ N) \}.$$ Congruence subgroup of level $N$ and genus $n$ is a finite indexed subgroup of $\Sp_{2n}(\mb{Z})$ containing the principal congruence subgroup. 

Let $\Gamma^{(n)}(N)$ denote a congruence subgroup of level $N$ and genus $n$. A Siegel modular form F, of genus $n$, weight $k$ with respect to $\Gamma^{(n)}(N)$, is a holomorphic function on the Siegel upper half space
$$ \mathbb{H}_n \coloneqq \{ Z : Z \in M_n(\mathbb{C}), \ \prescript{t}{}{Z} =Z \ \text{and} \ \mathrm{Im}(Z) >0 \} $$ satisfying the following two conditions. \begin{enumerate}
    \item Modularity condition 
$$ F((AZ+B)(CZ+D)^{-1}) = \mathrm{det}(CZ+D)^k F(Z) \quad \forall \ \begin{pmatrix}
A & B\\
C & D
\end{pmatrix} \in \Gamma^{(n)}(N) \  \text{and} \  Z \in \mathbb{H}_n .$$
\item  For $n =1$,  $F(Z)$ is bounded on $\{ Z = X+i Y : Y \geq Y_0 \} \ \forall \ Y_0 > 0.$ \end{enumerate} Holomorphy and modularity imply that a Siegel modular form has a Fourier expansion of the form
$$ F(Z) = \sum_{\substack{ T= T^t, \ T \geq 0 \\ T \ \text{half integral}}} A(T) e^{ 2 \pi i tr(TZ)} .$$ Siegel modular forms over $\Gamma^{(n)}(N)$ are generally called Siegel modular forms with level. Let $M_k(\Gamma^{(n)}(N))$ denote the space of Siegel modular forms of genus $n$ and weight $k$ over $\Gamma^{(n)}(N)$. $F$ is called cuspidal if $A(T) = 0$ unless $T > 0$ and let $\mr{S}_k(\Gamma^{(n)}(N))$ denote the subspace of cusp forms. This article focuses on the cusp forms of the genus $2$ with level. 

In case of genus $2$ there are $4$ congruence subgroups. They are
\begin{enumerate}

    \item Borel congruence subgroup 
    $$ B(N) = \Sp_4(\mb{Z}) \cap \begin{bmatrix} 
    \mb{Z} & N\mb{Z} & \mb{Z} & \mb{Z}  \\
     \mb{Z} & \mb{Z} & \mb{Z} &  \mb{Z} \\
    N\mb{Z} & N\mb{Z} & \mb{Z} &  \mb{Z}    \\ 
    N\mb{Z} & N\mb{Z} & N\mb{Z} & \mb{Z}     \\  
  \end{bmatrix} \quad  $$

  \item Siegel congruence subgroups 
  $$\Gamma_0^{2}(N) = \Sp_4(\mb{Z}) \cap  \begin{bmatrix} 
    \mb{Z} & \mb{Z} & \mb{Z} & \mb{Z}  \\
     \mb{Z} & \mb{Z} & \mb{Z} &  \mb{Z} \\
    N\mb{Z} & N\mb{Z} & \mb{Z} &  \mb{Z}    \\ 
    N\mb{Z} & N\mb{Z} & \mb{Z} & \mb{Z}     \\  
  \end{bmatrix} $$
  \item Klingen congruence subgroup 
  $$Q(N) = \Sp_4(\mb{Z}) \cap  \begin{bmatrix} 
    \mb{Z} & N\mb{Z} & \mb{Z} & \mb{Z}  \\
     \mb{Z} & \mb{Z} & \mb{Z} &  \mb{Z} \\
    \mb{Z} & N\mb{Z} & \mb{Z} &  \mb{Z}    \\ 
    N\mb{Z} & N\mb{Z} & N\mb{Z} & \mb{Z}     \\  
  \end{bmatrix} $$
  \item Paramodular congruence subgroup 
  $$K(N) = \Sp_4(\mb{Q}) \cap  \begin{bmatrix} 
    \mb{Z} & N\mb{Z} & \mb{Z} & \mb{Z}  \\
     \mb{Z} & \mb{Z} & \mb{Z} &  N^{-1}\mb{Z} \\
    \mb{Z} & N\mb{Z} & \mb{Z} &  \mb{Z}    \\ 
    N\mb{Z} & N\mb{Z} & N\mb{Z} & \mb{Z}     \\  
  \end{bmatrix}$$
\end{enumerate}

Let $\Gamma^{(2)}(N)$ represent one of the four congruence subgroups above. For each $\Gamma^{(2)}(N)$ we can find an open compact subgroup $K_{\mathfrak{f}}$ of $\G_4(\mb{A_Q})$ such that $\Gamma^{(2)}(N) = \G_4(\mb{Q}) \cap \G_4(\mb{R})^+K_{\mathfrak{f}}.$ Here $\G_4(\mb{R})^+$ is a subgroup of $\G_4(\mb{R})$ consisting of matrices with positive similitude. In the case of the congruence subgroups of genus $2$, we describe the construction of $K_{\mf{f}}$ below.

For a fixed $N$, let $r_p$ denote a positive integer such that $p^{r_p} | N$ and $p^{r_p+1} \nmid N.$ 
\begin{enumerate} \item If $\Gamma^{(2)}(N) = B(N)$ then $K_\mf{f} = \prod_{ p | N} B_{\mf{p}}(p^{r_p}) \prod_{ p \nmid N} \G_4(\mb{Z}_p)$ where  
   $$ B_{\mf{p}}(p^{r_p}) = \Sp_4(\mb{Z}_p) \cap \begin{bmatrix} 
    \mb{Z}_p & p^{r_p}\mb{Z}_p & \mb{Z}_p & \mb{Z}_p  \\
     \mb{Z}_p & \mb{Z}_p & \mb{Z}_p &  \mb{Z}_p \\
    p^{r_p}\mb{Z}_p & p^{r_p}\mb{Z}_p & \mb{Z}_p &  \mb{Z}_p    \\ 
    p^{r_p}\mb{Z}_p & p^{r_p}\mb{Z}_p & p^{r_p}\mb{Z}_p & \mb{Z}_p     \\  
  \end{bmatrix} $$

  \item If $\Gamma^{(2)}(N) = \Gamma_0^{2}(N)$ then $K_\mf{f} = \prod_{ p | N} \Gamma^2_{0,\mf{p}}(p^{r_p}) \prod_{ p \nmid N} \G_4(\mb{Z}_p)$ where  
  $$\Gamma_{0,\mf{p}}^{2}(p^{r_p}) = \Sp_4(\mb{Z}_p) \cap  \begin{bmatrix} 
    \mb{Z}_p & \mb{Z}_p & \mb{Z}_p & \mb{Z}_p  \\
     \mb{Z}_p & \mb{Z}_p & \mb{Z}_p &  \mb{Z}_p \\
    p^{r_p}\mb{Z}_p & p^{r_p}\mb{Z}_p & \mb{Z}_p &  \mb{Z}_p    \\ 
    p^{r_p}\mb{Z}_p & p^{r_p}\mb{Z}_p & \mb{Z}_p & \mb{Z}_p     \\  
  \end{bmatrix} $$
  \item If $\Gamma^{(2)}(N) = Q(N)$ then $K_\mf{f} = \prod_{ p | N} Q_{\mf{p}}(p^{r_p}) \prod_{ p \nmid N} \G_4(\mb{Z}_p)$ where 
  $$Q_{\mf{p}}(p^{r_p}) = \Sp_4(\mb{Z}_p) \cap  \begin{bmatrix} 
    \mb{Z}_p & p^{r_p}\mb{Z}_p & \mb{Z}_p & \mb{Z}_p  \\
     \mb{Z}_p & \mb{Z}_p & \mb{Z}_p &  \mb{Z}_p \\
    \mb{Z}_p & p^{r_p}\mb{Z}_p & \mb{Z}_p &  \mb{Z}_p    \\ 
    p^{r_p}\mb{Z}_p & p^{r_p}\mb{Z}_p & p^{r_p}\mb{Z}_p & \mb{Z}_p     \\  
  \end{bmatrix} $$
  \item  If $\Gamma^{(2)}(N) =K(N)$ then $K_\mf{f} = \prod_{ p | N} K_{\mf{p}}(p^{r_p}) \prod_{ p \nmid N} \G_4(\mb{Z}_p)$ where 
  $$ K_{\mf{p}}(p^{r_p}) = \Sp_4(\mb{Q}) \cap  \begin{bmatrix} 
    \mb{Z}_p & p^{r_p}\mb{Z}_p & \mb{Z}_p & \mb{Z}_p  \\
     \mb{Z}_p & \mb{Z}_p & \mb{Z}_p &  p^{-r}\mb{Z}_p \\
    \mb{Z}_p & p^{r_p}\mb{Z}_p & \mb{Z}_p &  \mb{Z}_p    \\ 
    p^{r_p}\mb{Z}_p & p^{r_p}\mb{Z}_p & p^{r_p}\mb{Z}_p & \mb{Z}_p     \\  \end{bmatrix} $$ \end{enumerate}  

For each cusp form, there is an associated automorphic representation over $\G_4(\mb{A_Q})$. For a fixed positive integer $N$,  let $F \in S_k(\Gamma^{(2)}(N))$ be a cusp form, $\Gamma^{(2)}(N)$ be any one of the four congruence subgroups defined above and $K_{\mf{f}}$ be the open compact group such that $\Gamma^{(2)}(N) = \G_4(\mb{Q}) \cap \G_4(\mb{R})^+K_{\mathfrak{f}}.$ The Strong Approximation Theorem for $\G(\mb{A_Q})$ states that $$\G_4(\mb{A_Q}) \cong \G_4(\mb{Q}) (\G_4(\mb{R})^+ K_{\mathfrak{f}}). $$ 
  
It implies that, given $g \in \G_4(\mb{A_Q})$ there exists $g_q \in \G_4(\mb{Q})$, $g_{\infty} \in \G_4(\mb{R})^+$, $k \in K_{\mathfrak{f}}$ such that
$ g = g_q(g_{\infty} k).$ An automorphic form associated with $F$ is a function on $\G_4(\mb{A_Q})$ denoted by $\phi_F$ and defined as follows: For $g \in \G_4(\mb{A_Q}),$ 
$$ \phi_F(g) \coloneqq \mu(g_{\infty})^k \mathrm{det}(C_{\infty} I_2 +D_{\infty})^{-k}F\big((A_{\infty} I_2 +B_{\infty})(C_{\infty} I_2 + D_{\infty})^{-1}\big) $$where $$I_2 = \begin{bmatrix}
    i & 0 \\ 0 & i 
\end{bmatrix} \quad \text{and} \quad g_{\infty}= \begin{bmatrix}
    A_{\infty} & B_{\infty} \\ C_{\infty} & D_{\infty} 
\end{bmatrix}.$$ From the fact that $F \in \mr{S}_k(\Gamma^{(2)}(N))$ and the strong approximation theorem, it follows that $\phi_F$ is well defined. It can also be shown that $\phi_F \in L^2(Z(\mb{A_Q})\G_4(\mb{Q}) \backslash \G_4(\mb{A_Q})).$ Given  $ h \in \G_4(\mb{A_Q})$, we define right translation of $\phi_F$ by 
$$ h.\phi_F(g) \coloneqq \phi_F(gh).$$ Let $V_F$ denote the subspace of $ L^2(Z(\mb{A_Q})\G_4(\mb{Q}) \backslash \G_4(\mb{A_Q}))$ generated by $h.\phi_F$ for $h \in \G_4(\mb{A_Q})$. The group $\G_4(\mb{A_Q})$ acts on $V_F$ by right translation. This action is defined as the representation associated with $F$ and is denoted by $\pi_F.$ More details on the construction of $\pi_F$ can be found in Section $4$ of \cite{MR1821182} and Section $3.2$ of \cite{MR2114732}. 

Since the representation is trivial on the center of $\G_4(\mb{A_Q})$, it can be seen as a representation of $\mr{PGSp}_4(\mb{A_Q}).$ Using the exceptional isomorphism, $\mr{PGSp}_4(\mb{A_Q}) \cong \mr{SO}_5(\mb{A_Q})$, $\pi_F$ can be extended to a representation of $\mr{SO}_5(\mb{A_Q})$. Hence, given $F$, we can attach a representation of $\mr{SO}_5(\mb{A_Q}).$ Theorem $1.3.2$ of \cite{MR3135650} gives a classification of all such representations. In Section $2.2$ of \cite{MR3766842}, Schmidt explains the classification specific to the case of modular forms of the genus $2$. In this case, there are $6$ distinct classes. In this article, we focus on modular forms that satisfy the generalized Ramanujan conjecture.  \\
\textbf{Generalized Ramanujan Conjecture:} Let $F \in S_k(\Gamma^{(n)}(N))$ be a Hecke eigenform with Satake-$p$-parameters $\alpha^{(F)}_{0,p},\alpha^{(F)}_{1,p},...,\alpha^{(F)}_{n,p}.$ A prime $p$ is called unramified if $p \nmid N$. GRC states that for all the unramified primes $p$, the Satake-$p$-parameters satisfy 
$$\lvert \alpha_{i,p} \rvert = 1 \ \text{for} \ i=1,2,...,n.$$ \cite[Prop 2.1]{MR3766842} proves that \textbf{G} and \textbf{Y} are the only classes that satisfy the Ramanujan conjecture.  
\begin{itemize}
    \item $General \ type, \ (\textbf{G}):$  $F \in S_k(\Gamma^{(2)}(N))$ is said to be of type \textbf{G}, if there exists a cuspidal automorphic representation $\pi$ of $\mr{GL}_4(\mb{A_Q})$ such that 
$$L(s,\pi_F,spin) = L(s,\pi).$$
\item $Yoshida \ type, \ (\textbf{Y}):$ $F \in S_k(\Gamma^{(2)}(N))$ is said to be of type \textbf{Y}, if there exists two cuspidal automorphic representations $\pi_1, \ \pi_2$ of $\mr{GL}_2(\mb{A_Q})$ such that $$L(s,\pi_F,spin) = L(s,\pi_1)L(s,\pi_2).$$ An example of such modular forms are the Yoshida lifts. 
\end{itemize}
\begin{definition}[Yoshida lifts] Let $f \in S_{k_1}(\Gamma_0(N_1))$ and $g \in S_{k_2}(\Gamma_0(N_2))$ be two Hecke eigen newforms where 
$$ \Gamma_0(N) = \{ \begin{pmatrix} 
    a & b  \\
    c & d  \\
  
    \end{pmatrix}\in \mr{SL}_2(\mb{Z}) : \ c \equiv 0 (\mr{mod}N) \}.$$ $F \in S_k(\Gamma^{(2)}(N))$ is said to be a Yoshida lift of $f$ and $g$, if $\pi_F$ is irreducible and $$L(s,\pi_F, \mr{spin}) = L(s,\pi_f)L(s,\pi_g).$$
\end{definition}

\section{First eigenvalue} 
For each $g \in \G_{2n}(\mathbb{Q})^{+} \cap M_{2n}(\mathbb{Z})$ such that $\mr{gcd}(\mu(g),N)=1$ we can associate a Hecke operator $T(g)$  on $M_k(\Gamma^{(n)}(N)).$ Let $\Gamma = \Sp_{2n}(\mb{Z})$, for $F\in M_k(\Gamma^{(n)}(N)),$

$$ \ T(g)F \coloneqq \sum_i F|_kg_i \quad \text{where} \  \Gamma^{(n)}(N) g \Gamma^{(n)}(N) = \sqcup_i \Gamma^{(n)}(N) g_i,  \quad g_i = \begin{pmatrix} 
    A_i & B_i  \\
    C_i & D_i  \\
  
    \end{pmatrix}$$
$$ \text{and} \ F|_kg_i(Z) = \mu (g)^{nk-\frac{n(n+1)}{2}}\mathrm{det}(C_iZ+D_i)^{-k}F((A_iZ+B_i)(C_iZ+D_i)^{-1}).$$ For a positive integer $m$ such that $\mr{gcd}(m,N)=1,$ 
$$T(m) \coloneqq \sum_{g : \mu(g) = m} T(g) .$$ In Theorem $4.7$ of \cite{MR2468862}, it is proved that there exists a basis for $M_k(\Gamma^{(n)}(N))$ which are eigenforms with respect to all Hecke operators $T(p)$ such that $p \nmid N$. For a Hecke eigenform $F \in \mr{S}_k(\Gamma^{(n)}(N))$, denote $\mu_F(g)$ as the eigenvalue of the operator $T(g)$. Classically $\mu_F(g)$ can be expressed in terms of $Satake \ p-parameters$. For any $g$ with $\mu (g) = p^r$, depending on $F$ there are $n+1$ complex numbers $(a^{(F)}_{0,p},a^{(F)}_{1,p},\dots,a^{(F)}_{n,p})$ satisfying 
\begin{equation} \mu_F(g) = {(p^{nk-\frac{n(n+1)}{4}}a^{(F)}_{0,p})}^r \sum_i \prod_{j=1}^n (a^{(F)}_{j,p}p^{-j})^{d_{ij}} \ \text{where} \ \Gamma^{(n)}(N) g \Gamma^{(n)}(N) = \sqcup_i \Gamma^{(n)}(N) g_i,\end{equation} 
$$
g_i = \begin{pmatrix} 
    A_i & B_i  \\
    0 & D_i  \\
  \end{pmatrix}
\ \text{and} \ 
D_i = \begin{pmatrix} 
    p^{d_{i1}} &    & \ast \\
      & \ddots & \vdots \\
    0 & \dots  & p^{d_{in}} 
    \end{pmatrix}.
 $$ The complex numbers $a^{(F)}_{j,p}$ for $0 \leq j \leq n$ are called the Satake $p$ parameters of $F$.  
 \begin{lemma} If $F \in \mr{S}_k(\Gamma^{(2)}(N))$ is a Hecke eigenform and $a^{(F)}_{0,p}, \ a^{(F)}_{1,p}, \ a^{(F)}_{2,p}$ are the Satake-p-parameters then 
 $$\mu_F(p) = p^{2k-\frac{3}{2}}(a^{(F)}_{0,p}+a^{(F)}_{0,p}a^{(F)}_{1,p}+a^{(F)}_{0,p}a^{(F)}_{2,p}+a^{(F)}_{0,p}a^{(F)}_{1,p}a^{(F)}_{2,p}).$$  \end{lemma}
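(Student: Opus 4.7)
The plan is to specialise the general Satake-parameter formula~(1) to the case $n = 2$, $r = 1$ (so $\mu(g) = p$) and to carry out the resulting coset sum explicitly. To ease notation, abbreviate $a_j := a^{(F)}_{j,p}$. Since the Hecke operator $T(p)$ is only defined when $p \nmid N$, the local component at $p$ of the level group $K_{\mf{f}}$ coincides with $\G_4(\mb{Z}_p)$; consequently the double coset $\Gamma^{(2)}(N)\, g\, \Gamma^{(2)}(N)$ has the same left-coset decomposition as the corresponding double coset in $\Sp_4(\mb{Z})$. By the symplectic elementary divisor theorem there is a unique double coset of similitude $p$, so we may fix $g = \mr{diag}(1,1,p,p)$.

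Plugging $n = 2$ and $r = 1$ into (1), the prefactor becomes $p^{nk - n(n+1)/4}\, a_0 = p^{2k - 3/2}\, a_0$, and one is reduced to computing
$$S_p \coloneqq \sum_i (a_1\, p^{-1})^{d_{i1}}(a_2\, p^{-2})^{d_{i2}},$$
where $(p^{d_{i1}}, p^{d_{i2}})$ is the diagonal of $D_i$ in the Siegel form $g_i = \begin{pmatrix} A_i & B_i \\ 0 & D_i \end{pmatrix}$ of each left-coset representative.

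The key step is to carry out the coset decomposition of $\Sp_4(\mb{Z})\, g\, \Sp_4(\mb{Z})$ explicitly in Siegel form. The relation $\mu(g_i) = p$ forces $A_i^t D_i = p I_2$, so that $(d_{i1}, d_{i2}) \in \{0,1\}^2$. A Smith-normal-form argument at $p$ shows that all four possibilities occur, with multiplicities $1, p, p^2, p^3$ respectively, for a total of $1 + p + p^2 + p^3$ cosets (the expected degree of $T(p)$ on $\Sp_4$). Substituting,
$$S_p = 1 + p \cdot a_1 p^{-1} + p^2 \cdot a_2 p^{-2} + p^3 \cdot a_1 a_2 p^{-3} = 1 + a_1 + a_2 + a_1 a_2,$$
and multiplying by $p^{2k - 3/2}\, a_0$ produces the stated formula.

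The main obstacle is the multiplicity count $(1, p, p^2, p^3)$ of the third step. This is a classical but somewhat lengthy computation: one parameterises representatives in Siegel form, uses the symplectic relations $A_i^t D_i = p I_2$ and $B_i^t D_i = D_i^t B_i$ to restrict $A_i$ and constrain $B_i$ to be of the form $A_i S$ with $S$ symmetric, and then counts the $B_i$ modulo integer symmetric matrices. The resulting counts $(1, p, p^2, p^3)$ are the standard coset numbers for $T(p)$ acting on $\Sp_4(\mb{Z})$, available for instance in Andrianov's treatise on Siegel Hecke operators; one may simply cite this or reproduce it via the $p$-adic Bruhat decomposition of $\G_4(\mb{Q}_p)$.
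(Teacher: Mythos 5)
Your proposal is correct and follows essentially the same route as the paper: specialize formula (1) to $n=2$, $r=1$, use the left-coset decomposition of $T(p)$ into the four types with diagonal exponents $(d_{i1},d_{i2})\in\{0,1\}^2$ and multiplicities $1,p,p^2,p^3$, and sum. The only difference is that the paper writes out the coset representatives explicitly while you outsource the multiplicity count to Andrianov, which is a legitimate shortcut.
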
 
\begin{proof} For the Hecke operator $T(p)$ we have the following decomposition $$T(p) =  \Gamma_2 \begin{bmatrix} 
    1_2 &   \\
      & p1_2  \\
  \end{bmatrix} \Gamma_2 = \Gamma_2 \begin{bmatrix} 
    p1_2 &   \\
      & 1_2  \\
  \end{bmatrix} \sqcup \bigsqcup_{a \in \Z/p\Z} \Gamma_2  \begin{bmatrix} 
    1 & & a &   \\
      & p &  &   \\
     & & p&      \\ 
     & & & 1     \\  
  \end{bmatrix}\sqcup \bigsqcup_{\alpha,d \in \Z/p\Z} \Gamma_2  $$ $$  \begin{bmatrix} 
    p & &  &   \\
     -\alpha  & 1 &  &d   \\
     & & 1 & \alpha     \\ 
     & & & p     \\  
  \end{bmatrix}\sqcup \bigsqcup_{a,b,d \in \Z/p\Z} \Gamma_2  \begin{bmatrix} 
    1 & & a & b   \\
      & 1 & b &  d \\
     & & p&      \\ 
     & & & p     \\  
  \end{bmatrix}.$$There are four kinds of right cosets in the above decomposition,
  $$ g_1 = \begin{bmatrix} 
    p1_2 &   \\
      & 1_2  \\
  \end{bmatrix}, \ g_2 = \begin{bmatrix} 
    1 & & \ast &   \\
      & p &  &   \\
     & & p&      \\ 
     & & & 1     \\  
  \end{bmatrix},  \  g_3 = \begin{bmatrix} 
    p & &  &   \\
     \ast  & 1 &  &\ast   \\
     & & 1 & \ast     \\ 
     & & & p     \\  
  \end{bmatrix}, \ g_4 = \begin{bmatrix} 
    1 & & \ast & \ast   \\
      & 1 & \ast &  \ast \\
     & & p&      \\ 
     & & & p     \\  
  \end{bmatrix}. $$With $D_i'$s,
  $$D_1 = \begin{bmatrix} 
  1 & 0 \\
  0 & 1 \end{bmatrix}, \ D_2 = \begin{bmatrix} 
  p & 0 \\
  0 & 1 \end{bmatrix}, \ D_3 = \begin{bmatrix} 
  1 & \ast \\
  0 & p \end{bmatrix}, \ D_4 = \begin{bmatrix} 
  p & 0 \\
  0 & p \end{bmatrix}.$$ $\mu_F(p)$ can be calculated using the formula in $(1)$ and evaluating the contribution of each $D_i$. For $D_1$, $d_{1,1}=d_{1,2} = 0$. Hence the contribution to $\mu_F(p) $ is $p^{2k-\frac{3}{2}}a^{(F)}_{0,p}.$ For $D_2$, $d_{2,1} = 1$ and $d_{2,2}=0$, and there are $p$ number of cosets of these kind. Adding each cosets contribution to the eigenvalue we get $$ p^{2k-\frac{3}{2}}a^{(F)}_{0,p}pa_{1,p}^{(F)}p^{-1} = p^{2k-\frac{3}{2}}a^{(F)}_{0,p}a^{(F)}_{1,p}.$$ Similarly the contribution of $D_3'$s and $D_4'$s comes out to be $p^{2k-\frac{3}{2}}a^{(F)}_{0,p}a^{(F)}_{2,p}$ and $p^{2k-\frac{3}{2}}a^{(F)}_{0,p}a^{(F)}_{2,p}a^{(F)}_{2,p}$ respectively. Adding everything, it follows that 
  $$\mu_F(p) = p^{2k-\frac{3}{2}}( a^{(F)}_{0,p} + a^{(F)}_{0,p}a^{(F)}_{1,p}+a^{(F)}_{0,p}a^{(F)}_{2,p}+a^{(F)}_{0,p}a^{(F)}_{1,p}a^{(F)}_{2,p}).$$ \end{proof}
  Let $\pi_F$ be the automorphic representation associated with $F$ and $ L(s, \pi_F)$ be the corresponding spin L-function. $ \pi_F = \otimes_{p}^{'} \pi_p$ where $\pi_p$ is an unramified representation of $\G_{4}(\mb{Q}_p)$ for all primes $p \nmid N.$ And $\ds{ L(s, \pi_F) = \prod_{p} L_p(s,\pi_{F,p})}$, where $L_p(s, \pi_{F,p})$ are called the local L-factors. There exists complex numbers $a_{1,p}, a_{2,p}, a_{3,p}$ and $a_{4,p}$ such that $$ L_p(s,\pi_{F,p}) = \big((1-a_{1,p}p^{-s})(1-a_{2,p}p^{-s})(1-a_{3,p}p^{-s})(1-a_{4,p}p^{-s})\big)^{-1}.$$ For unramified primes, i.e for $p \nmid N$, these are the Satake-$p$-parameters associated to $F$. Hence
$$L_p(s,\pi_p) = \Big((1 - a^{(F)}_{0,p}p^{-s})(1 - a^{(F)}_{0,p}a^{(F)}_{1,p}p^{-s})(1 - a^{(F)}_{0,p}a^{(F)}_{2,p}p^{-s})(1 - a^{(F)}_{0,p}a^{(F)}_{1,p}a^{(F)}_{2,p}p^{-s})\Big)^{-1}$$ for all $ p \nmid N.$
  At ramified primes, the local L-factor is still an inverse of a polynomial in $p^{-s}$ but the degree can be less than $4$. Hence, we can write the local factor with $4$ constants $a_{i,p}$ but these can be zero as well. 
  Since we are interested in signs of $\mu_F(p)$, it is enough to study normalized eigenvalues $\lambda_{F}(p) = \frac{\mu_F(p)}{p^{2k-\frac{3}{2}}} $. From Lemma $2.1$, we conclude that $\lambda_F(p) = a_{1,p}+a_{2,p}+a_{3,p}+a_{4,p}$. If the local factors are written as Dirichlet series, say $L_p(s,\pi_F) = \sum_{r=1}^\infty a_{\pi_F}(p^r) p^{-rs} $, then at unramified primes $\lambda_F(p) = a_{\pi_F}(p)$. 
  
For a Hecke eigenform in class \textbf{G}, there exists an irreducible cuspidal automorphic representation $\pi$ of $\mr{GL}_4(\mb{A_Q})$ such that $\lambda_F(p) = a_{\pi}(p)$ for all unramified primes. Similarly, for a cusp form in class \textbf{Y}, there exists two irreducible cuspidal automorphic representations $\pi$ and $\tau$ of $\GL_2(\mb{A_Q})$ such that for all unramified primes $p$, $\lambda_F(p) = a_{\pi}(p)+a_{\tau}(p)$. 

In the remainder of the section we prove a few technical results that will be used for the main theorem. For any two real valued functions $f(x)$ and $g(x)$ we use the following notations. \begin{enumerate}
    \item $f(x) = O(g(x))$ if there exists a constant $c$ such that $ \lvert f(x) \rvert \leq c \lvert g(x) \rvert$ for sufficiently large $x$.
    \item $f(x) = o(g(x)$ if $\displaystyle{\lim_{x \rightarrow \infty} \frac{f(x)}{g(x)} = 0.}$
\end{enumerate} 
\begin{lemma} Let $\pi$ be a self dual, unitary,  cuspidal automorphic representation of $\GL_m(\mb{A_Q})$ for $m \leq 4$. If $L_p(s, \pi) = \sum_{r=1}^\infty a_{\pi}(p^r)p^{-rs} $ and $a_{\pi}(p)$ is bounded for all but finitely many primes then  $$\ds{\sum_{p \leq x} a_{\pi}(p)^2 = \frac{x}{\log{x}} + o\Big(\frac{x}{\log{x}} \Big) }$$\end{lemma}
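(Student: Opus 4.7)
The plan is to prove this via the Rankin--Selberg L-function $L(s, \pi \times \pi)$. Since $\pi$ is self-dual, $\pi \cong \tilde{\pi}$, and by the Jacquet--Piatetski-Shapiro--Shalika theory $L(s,\pi \times \pi) = L(s,\pi \times \tilde\pi)$ is a genuine L-function with meromorphic continuation, functional equation, and a \emph{simple pole} at $s=1$. This pole is the engine that drives the asymptotic; the cited prime number theorem (Theorem 3 of \cite{MR2364718}) converts the pole's residue into the leading term $x/\log x$ for the prime Dirichlet coefficients.

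Next I would identify the relevant Dirichlet coefficient at an unramified prime $p$. If the Satake parameters of $\pi_p$ are $\alpha_{1,p},\ldots,\alpha_{m,p}$, then $a_{\pi}(p) = \sum_i \alpha_{i,p}$, which is real by self-duality. The local factor
$$L_p(s,\pi \times \pi) = \prod_{i,j}(1-\alpha_{i,p}\bar\alpha_{j,p}p^{-s})^{-1}$$
has $p^{-s}$ coefficient equal to $\sum_{i,j}\alpha_{i,p}\bar\alpha_{j,p} = |a_\pi(p)|^2 = a_\pi(p)^2$. Write $L(s,\pi\times\pi) = \sum_n b_n n^{-s}$; then $b_p = a_\pi(p)^2$ at every unramified prime $p$. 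Applying Theorem 3 of \cite{MR2364718} to $L(s,\pi\times\pi)$ (whose order of pole at $s=1$ is $1$) yields
$$\sum_{p \leq x} b_p = \frac{x}{\log x} + o\!\left(\frac{x}{\log x}\right).$$

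It then remains to replace $b_p$ by $a_\pi(p)^2$ in the sum. At the finitely many ramified primes the local factors differ, but since there are only finitely many such primes and $a_\pi(p)$ is bounded by hypothesis, the discrepancy contributes only $O(1)$ and is absorbed into the error term. This yields the claimed asymptotic.

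The main obstacle is ensuring the hypotheses of the cited prime number theorem are satisfied for $L(s,\pi\times\pi)$ in the range $m \leq 4$: namely, one needs the analytic properties (holomorphy away from $s=1$, nonvanishing on $\mathrm{Re}(s)=1$, suitable standard bounds, functional equation) that make the Tauberian argument of \cite{MR2364718} applicable. For $m \leq 4$ these are known by the work of Jacquet--Shalika and Shahidi, so the input is unconditional, but one must cite the correct analytic results for $L(s,\pi\times\pi)$. The boundedness hypothesis on $a_\pi(p)$ is what lets us safely truncate the finitely many ramified contributions and also allows us to dismiss prime-power contributions $p^r$ with $r \geq 2$ via the trivial bound $\sum_p a_\pi(p^2)^2 p^{-2s}$ converging absolutely for $\mathrm{Re}(s) > 1/2$, so these make no impact on the leading-order asymptotic.
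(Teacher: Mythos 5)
Your route is, at bottom, the same as the paper's: Theorem 3 of Wu--Ye \emph{is} the prime number theorem for the Rankin--Selberg convolution $\pi\times\tilde\pi$, so introducing $L(s,\pi\times\pi)$ and its simple pole at $s=1$ is a description of the engine inside the cited theorem rather than a different argument. The identification $b_p=a_\pi(p)^2$ at unramified primes and the disposal of the finitely many ramified primes are also as in the paper.

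There is, however, one genuine omission. You assert that the cited theorem directly yields the unweighted asymptotic $\sum_{p\le x}b_p = x/\log x + o(x/\log x)$. What it actually yields is the weighted form $\sum_{p\le x}(\log p)\,a_\pi(p)^2 = x + O\big(x e^{-c\sqrt{\log x}}\big)$, and the passage from this to the unweighted count is the entire body of the paper's proof: one replaces $\log p$ by $\log x$ term by term and bounds the discrepancy by $M^2\big(\pi(x)\log x-\vartheta(x)\big)=o(x)$, which is exactly where the hypothesis that $|a_\pi(p)|\le M$ off a finite set is used; dividing by $\log x$ then finishes. In your write-up the boundedness hypothesis is invoked only for ramified primes and prime powers, so the deweighting step is missing as written. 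It is easily supplied --- either by the paper's comparison argument or by Abel summation applied to the nonnegative quantities $a_\pi(p)^2$ --- but it must be supplied, since it is not part of the statement of the theorem you cite. (Your side remark bounding the $r\ge 2$ contributions via boundedness of $a_\pi(p)$ is also not quite right, since the hypothesis controls only $a_\pi(p)$ and not $a_\pi(p^r)$; this is precisely the role of Hypothesis H in Wu--Ye, which they verify for $m\le 4$, so it is safest to take their prime-sum statement as a black box.)
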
 
\begin{proof} Say $S$ is the finite set of primes such that $a_{\pi}$ is bounded for all $ p \not \in S.$ Applying \cite[Theorem 3]{MR2364718} for $\pi$ with $\tau_0 = 0$,  we get $$ \sum_{p \leq x} (\log{p}) a_{\pi}(p)^2 = x + O(xe^{-c \sqrt{\log{x}}}).$$This can be written as, 
$$\lim_{x \rightarrow \infty} \frac{\lvert \sum_{p \leq x} (\log{p}) a_{\pi}(p)^2 - x \rvert }{xe^{-c \sqrt{\log{x}}}} < \infty $$ 
Since $\ds{\lim_{x \rightarrow \infty} e^{c\sqrt{\log{x}}} = \infty }$, 
$$  \lim_{x \rightarrow \infty} \frac{\sum_{p \leq x} (\log{p}) a_{\pi}(p)^2 - x}{x} = 0.$$Hence,  
$$ \sum_{p \leq x} (\log{p}) a_{\pi}(p)^2 = x +o(x). $$It implies that 
$$  \sum_{p \leq x} (\log{p}) a_{\pi}(p)^2 - \sum_{p \leq x} (\log{x}) a_{\pi}(p)^2  + \sum_{p \leq x} (\log{x}) a_{\pi}(p)^2 = x +o(x) $$We note that, 
$$ \lim_{x \rightarrow \infty} \frac{\sum_{p \leq x} (\log{x}) a_{\pi}(p)^2 - \sum_{p \leq x} (\log{p}) a_{\pi}(p)^2}{x} = \lim_{x \rightarrow \infty} \sum_{p \leq x} \frac{\log{x}-\log{p}}{x}a_{\pi}(p)^2 $$ 
$$= \lim_{x \rightarrow \infty} \sum_{p \leq x, \ p \in S } \frac{\log{x}-\log{p}}{x}a_{\pi}(p)^2+\lim_{x \rightarrow \infty} \sum_{p \leq x, \ p \not \in S} \frac{\log{x}-\log{p}}{x}a_{\pi}(p)^2 .$$ Since the first limit has finite summation and $  \lim_{x \rightarrow \infty}\frac{\log{x}-\log{p}}{x} =0$, the first limit is $0$. Say $\lvert a_{\pi}(p) \rvert \leq M$ for $ p \not \in S$ then   
$$\lim_{x \rightarrow \infty} \sum_{p \leq x, \ p \not \in S} \frac{\log{x}-\log{p}}{x}a_{\pi}(p)^2 \leq M^2  \lim_{x \rightarrow \infty} \sum_{p \leq x} \frac{\log{x}-\log{p}}{x} .$$ $\ds{\sum_{p \leq x} \log{p}}$ is called first Chebyshev's function and it is denoted by $\vartheta(x)$. 
$$ \lim_{ x \rightarrow \infty}\sum_{p \leq x}  \frac{\log{x} - \log{p}}{x} = \lim_{ x \rightarrow \infty}\sum_{p \leq x}  \frac{\log{x}}{x} -  \lim_{ x \rightarrow \infty} \frac{\vartheta(x)}{x} = \lim_{ x \rightarrow \infty} \frac{\pi(x) \log{x}}{x} -  \lim_{ x \rightarrow \infty} \frac{\vartheta(x)}{x}.$$Prime number theorem states that, $\ds{ \lim_{ x \rightarrow \infty} \frac{\pi(x) \log{x}}{x} = \lim_{ x \rightarrow \infty} \frac{\vartheta(x)}{x} = 1}$. Hence 
$$  \lim_{x \rightarrow \infty} \sum_{p \leq x, \ p \not \in S} \frac{\log{x}-\log{p}}{x} =0 .$$ And we conclude that,
$$ \lim_{x \rightarrow \infty} \frac{\sum_{p \leq x} (\log{x}) a_{\pi}(p)^2 - \sum_{p \leq x} (\log{p}) a_{\pi}(p)^2}{x} = 0.$$Hence, $\ds{ \sum_{p \leq x} (\log{p}) a_{\pi}(p)^2 - \sum_{p \leq x} (\log{x}) a_{\pi}(p)^2 = o(x)}$ and
$$ \sum_{p \leq x} (\log{x}) a_{\pi}(p)^2 = x + o(x) $$ Dividing the above equation by $\log{x}$ on both sides proves the lemma.  \end{proof}  
\begin{lemma} Let $\pi_1$ and $\pi_2$ be cuspidal automorphic representations of $\GL_m(\mb{A_Q})$ for $m \leq 4.$ Assume that they have trivial central character, $ \pi_1 \not \cong \pi_2$ and there exists a finite set of primes $S$ such that $\lvert a_{\pi_1}(p)\ a_{\pi_2}(p)\rvert \leq M $ for some positive constant $M$ and for all $ p \not \in S.$ Then
$$\ds{\sum_{p \leq x}  a_{\pi_1}(p)a_{\pi_2}(p) = o\Big( \frac{x}{\log{x}} \Big)}.$$ 
\end{lemma}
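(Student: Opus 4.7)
The plan is to mimic the proof of Lemma 2.2 almost verbatim, swapping the diagonal Rankin--Selberg input with the off-diagonal one. Applying Theorem 3 of \cite{MR2364718} to the distinct pair $(\pi_1,\pi_2)$, with the analogue of $\tau_0 = 0$, should produce
\[
\sum_{p \leq x} (\log p)\, a_{\pi_1}(p)\, a_{\pi_2}(p) \;=\; O\!\bigl(x e^{-c\sqrt{\log x}}\bigr) \;=\; o(x).
\]
The trivial central character hypothesis is what lets us identify each $\pi_i$ with its contragredient (so the coefficient appearing on the left is genuinely $a_{\pi_1}(p)a_{\pi_2}(p)$ and not $a_{\pi_1}(p)\overline{a_{\pi_2}(p)}$), while $\pi_1 \not\cong \pi_2 \cong \widetilde{\pi_2}$ is exactly what kills the main term that would otherwise survive on the right-hand side.

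The second step is to upgrade the weight from $\log p$ to $\log x$ using the elementary identity
\[
\sum_{p \leq x} (\log x)\, a_{\pi_1}(p) a_{\pi_2}(p) \;=\; \sum_{p \leq x} (\log p)\, a_{\pi_1}(p) a_{\pi_2}(p) \;+\; \sum_{p \leq x} (\log x - \log p)\, a_{\pi_1}(p) a_{\pi_2}(p).
\]
I would split the last sum into the contribution from $p \in S$, which is $O(\log x) = o(x)$ because $S$ is finite, and the contribution from $p \notin S$, which is bounded in absolute value by
\[
M \sum_{p \leq x}(\log x - \log p) \;=\; M\bigl(\pi(x)\log x - \vartheta(x)\bigr) \;=\; o(x)
\]
via the prime number theorem in the two forms $\pi(x)\log x/x \to 1$ and $\vartheta(x)/x \to 1$, exactly as in the previous lemma. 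Combining the pieces yields $\sum_{p \leq x}(\log x)\, a_{\pi_1}(p) a_{\pi_2}(p) = o(x)$, and dividing by $\log x$ gives the claim.

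The main obstacle is essentially the verification step for the input: one must check that the precise statement of Theorem 3 of \cite{MR2364718} accepts any inequivalent pair of cuspidal automorphic representations of $\GL_m(\mb{A_Q})$ with $m \leq 4$ and trivial central character, and that the off-diagonal decay produced is indeed $O(xe^{-c\sqrt{\log x}})$ with no surviving main term. Once that ingredient is in hand, the rest is a routine elementary repetition of the argument used for Lemma 2.2, with the uniform bound $M$ on $|a_{\pi_1}(p)a_{\pi_2}(p)|$ playing the role previously played by the bound on $a_\pi(p)^2$.
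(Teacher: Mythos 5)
Your proposal matches the paper's proof essentially step for step: the same application of Theorem 3 of \cite{MR2364718} with $\tau_0=0$ to the inequivalent pair to get $\sum_{p\le x}(\log p)a_{\pi_1}(p)a_{\pi_2}(p)=o(x)$, the same replacement of the weight $\log p$ by $\log x$ via the split into $p\in S$ and $p\notin S$ with the prime number theorem in the forms $\pi(x)\log x/x\to 1$ and $\vartheta(x)/x\to 1$, and the same final division by $\log x$. Your explicit remarks on why the trivial central character and $\pi_1\not\cong\pi_2$ make the Rankin--Selberg input applicable with no surviving main term are correct and, if anything, slightly more careful than the paper's own exposition.
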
 
\begin{proof} Apply \cite[Theorem 3]{MR2364718} for $\pi_1$ and $\pi_2$ with $\tau_0 = 0$, we get $$ \sum_{p \leq x} (\log{p}) a_{\pi_1}(p)a_{\pi_2}(p) = O(xe^{-c \sqrt{\log{x}}}). $$
Since $\ds{\lim_{x \rightarrow \infty} e^{c\sqrt{\log{x}}} = \infty }$, similar to previous lemma we conclude that
$$ \sum_{p \leq x} (\log{p}) a_{\pi_1}(p)a_{\pi_2}(p) =o(x),$$ which can be written as
$$ \sum_{p \leq x} (\log{p}) a_{\pi_1}(p)a_{\pi_2}(p) - \sum_{p \leq x} (\log{x}) a_{\pi_1}(p)a_{\pi_2}(p)  + \sum_{p \leq x} (\log{x}) a_{\pi_1}(p)a_{\pi_2}(p) = o(x) .$$ We note that, 
$$ \lim_{x \rightarrow \infty} \frac{\sum_{p \leq x} (\log{x}) a_{\pi_1}(p)a_{\pi_2}(p) - \sum_{p \leq x} (\log{p}) a_{\pi_1}(p)a_{\pi_2}(p)}{x} = \lim_{x \rightarrow \infty} \sum_{ p \leq x} \frac{\log{x}-\log{p}}{x} a_{\pi_1}(p)a_{\pi_2}(p) $$ 
$$=\lim_{x \rightarrow \infty} \sum_{ p \leq x, \ p \in S} \frac{\log{x}-\log{p}}{x} a_{\pi_1}(p)a_{\pi_2}(p)+\lim_{x \rightarrow \infty} \sum_{ p \leq x, \ p \not \in S} \frac{\log{x}-\log{p}}{x} a_{\pi_1}(p)a_{\pi_2}(p).$$ Using the bound $\lvert a_{\pi_1}(p)a_{\pi_2}(p) \rvert \leq M$ for $ p \not \in S$, the above summation is   
$$ \leq \lim_{x \rightarrow \infty} \sum_{ p \leq x, \ p \in S} \frac{\log{x}-\log{p}}{x} a_{\pi_1}(p)a_{\pi_2}(p) + M  \lim_{x \rightarrow \infty} \sum_{p \leq x} \frac{\log{x}-\log{p}}{x} = 0 .$$ Similar to the previous lemma, the first limit is zero since the summation is a finite sum and the second limit is zero using the Prime number theorem. Hence, $$\sum_{p \leq x} (\log{p}) a_{\pi_1}(p)a_{\pi_2}(p) - \sum_{p \leq x} (\log{x}) a_{\pi_1}(p)a_{\pi_2}(p)= o(x)$$ and 
$$ \sum_{p \leq x} (\log{x}) a_{\pi_1}(p)a_{\pi_2}(p) =  o(x). $$  \end{proof}
\begin{corollary} 
Let $F \in \mr{S}_k(\Gamma^{(2)}(N))$ be a normalized Hecke eigenform for all primes $ p \nmid N$ and satisfies the Ramanujan conjecture. Let $\lambda_F(p)$ represent the eigenvalue for the operator $T(p)$ for all $ p \nmid N$. Then 
$$ \sum_{ p \leq x, \ p \nmid N } \lambda_F(p)^2 =m \frac{x}{\log{x}} + o\Big( \frac{x}{\log{x}} \Big)  $$ where 
$$m=
    \begin{cases}
        1 & \text{if } F \in \textbf{G}\\
        2 & \text{if } F \in \textbf{Y}.
    \end{cases}$$

\end{corollary}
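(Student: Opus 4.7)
The plan is to split into the two Ramanujan-admissible classes $\textbf{G}$ and $\textbf{Y}$ and in each case rewrite $\sum_{p\leq x,\, p\nmid N}\lambda_F(p)^2$ in terms of sums covered by Lemmas 2.2 and 2.3. Ramanujan supplies the uniform boundedness hypothesis required by both lemmas: since $|\alpha^{(F)}_{i,p}|=1$ at every unramified prime, we have $|\lambda_F(p)| \leq 4$ on all but finitely many primes, and each individual Dirichlet coefficient $a_\pi(p)$ attached to a factor of the spin $L$-function is similarly bounded.

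First I would handle the case $F\in \textbf{G}$. By the description in Section 1, there is a cuspidal automorphic representation $\pi$ of $\GL_4(\mb{A_Q})$ with $\lambda_F(p)=a_\pi(p)$ for every $p\nmid N$. This $\pi$ is unitary (being cuspidal) and self-dual (since $\pi_F$ descends to $\mr{PGSp}_4$, forcing its transfer to $\GL_4$ to have trivial central character and hence to be self-dual), and the coefficients $a_\pi(p)$ are bounded on unramified primes. Applying Lemma 2.2 and noting that the finitely many ramified primes contribute only $O(1)$ to the sum gives $\sum_{p\leq x,\, p\nmid N}\lambda_F(p)^2 = x/\log x + o(x/\log x)$, which is the $m=1$ conclusion.

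Next I would handle the case $F\in \textbf{Y}$, where there exist cuspidal $\pi_1,\pi_2$ on $\GL_2(\mb{A_Q})$ with $\lambda_F(p)=a_{\pi_1}(p)+a_{\pi_2}(p)$ for $p\nmid N$. Squaring and splitting yields
\begin{equation*}
\sum_{p\leq x,\, p\nmid N}\lambda_F(p)^2 = \sum_{p\leq x,\, p\nmid N} a_{\pi_1}(p)^2 + \sum_{p\leq x,\, p\nmid N} a_{\pi_2}(p)^2 + 2\sum_{p\leq x,\, p\nmid N} a_{\pi_1}(p)a_{\pi_2}(p).
\end{equation*}
Each diagonal sum contributes $x/\log x + o(x/\log x)$ by Lemma 2.2 applied to $\pi_1$ and $\pi_2$ respectively, and the cross sum is $o(x/\log x)$ by Lemma 2.3, so the total is $2x/\log x + o(x/\log x)$, giving $m=2$.

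The step requiring additional care is the hypothesis check for Lemma 2.3 in the $\textbf{Y}$ case, which needs $\pi_1\not\cong\pi_2$ and trivial central characters. Triviality of the central characters is inherited from $\pi_F$ being a representation of $\mr{PGSp}_4$. Non-isomorphism is consistent with viewing $F$ as a genuine Yoshida-type form, since $\pi_1\cong\pi_2$ would force the spin $L$-function to factor as $L(s,\pi_1)^2$ and place $F$ outside the Yoshida class; it is also the form in which the assumption is phrased in the main theorem. Beyond this hypothesis verification the argument is a direct application of the two preceding lemmas, so I do not anticipate a substantive obstacle.
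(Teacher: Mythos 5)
Your proposal is correct and follows essentially the same route as the paper: split into the classes \textbf{G} and \textbf{Y}, apply Lemma 2.2 to the diagonal sums and Lemma 2.3 to the cross term, and absorb the finitely many ramified primes into the error term. The extra care you take in verifying the boundedness, self-duality, and non-isomorphism hypotheses is a point the paper passes over more quickly, but it does not change the argument.
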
 
\begin{proof} Let $\pi$ be a self dual unitary cuspidal automorphic representation over $\GL_m(\mb{Q})$ for $m \leq 4.$ Lemma $2.2$ can be written as 
$$ \sum_{ p\leq x, \ p \mid N} a_{\pi}(p)^2+\sum_{ p\leq x,\ p \nmid N} a_{\pi}(p)^2 = \frac{x}{\log{x}} + o\Big( \frac{x}{\log{x}} \Big) .$$ The finite sum can be absorbed into $\ds{o\Big( \frac{x}{\log{x}} \Big)}$ and we conclude $$\sum_{ p\leq x,\ p \nmid N} a_{\pi}(p)^2 = \frac{x}{\log{x}} + o\Big( \frac{x}{\log{x}} \Big).$$

If $F$ is in class \textbf{G} then there exists a self dual, unitary cuspidal automorphic representation $\pi$ of $\GL_4(\mb{Q})$ such that $\lambda_F(p) = a_{\pi}(p)$ for all $ p \nmid N.$ Hence, the corollary follows from Lemma $2.2.$ If $F$ is in class \textbf{Y} then $\lambda_F(p) = a_{\pi_1}(p)+a_{\pi_2}(p)$ for all $ p \nmid N$ where $\pi_1,\pi_2$ are distinct self dual, unitary cuspidal automorphic representations of $\GL_2(\mb{Q}).$ In this case,
$$ \lambda_F(p)^2 = a_{\pi_1}(p)^2+a_{\pi_2}(p)^2 + 2a_{\pi_1}a_{\pi_2}.$$ Applying Lemma $2.2$ and $2.3$ we get, 
$$ \sum_{p \leq x, \ p \nmid N}\lambda_F(p)^2 =  \frac{x}{\log{x}} + o\Big( \frac{x}{\log{x}} \Big)+\frac{x}{\log{x}} + o\Big( \frac{x}{\log{x}} \Big) + o\Big( \frac{x}{\log{x}} \Big)= 2 \frac{x}{\log{x}} + o\Big( \frac{x}{\log{x}} \Big) .$$\end{proof}

\section{Main result}
The main result of this article is on comparing the signs of eigenvalues of two distinct modular forms satisfying the Ramanujan conjecture. 

\begin{lemma} Let $F\in \mr{S}_{k_1}(\Gamma^{(2)}(N_1))$ and $G \in \mr{S}_{k_2}(\Gamma^{(2)}(N_2))$ be two Hecke eigenforms satisfying the Ramanujan conjecture. Assume that, for some $c \in (0,4)$ and $ \alpha > \frac{15}{16}$, $ \# \{ p \leq x : \lvert \lambda_G(p) \rvert > c \} \geq \alpha \frac{x}{\log{x}} $ for sufficiently large $x$. Let $S$ contain all the primes $p$ dividing $N_1$ and $N_2$. Then, $$ \sum_{p \leq x, \ p \not \in S} \lambda_F(p)^2\lambda_G(p)^2 \geq \beta \frac{x}{\log{x}} + o\Big( \frac{x}{\log{x}}\Big)$$ for some $\beta \in (0,1).$
\end{lemma}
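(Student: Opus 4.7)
\bigskip

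\noindent\textbf{Proof plan.} The idea is to split primes by the size of $|\lambda_G(p)|$ and control the bad set using the density hypothesis together with the Ramanujan bound.

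\medskip

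\noindent First, I would set up the two key inputs. Since both $F$ and $G$ satisfy the Ramanujan conjecture, Lemma $2.1$ together with $|a^{(F)}_{j,p}|=1$ gives the uniform bound $|\lambda_F(p)|,|\lambda_G(p)|\leq 4$, hence $\lambda_F(p)^2\leq 16$ and $\lambda_G(p)^2\leq 16$ for all $p\notin S$. Second, Corollary $2.4$ applied to $F$ yields
\[
\sum_{p\leq x,\ p\notin S}\lambda_F(p)^2 \;=\; m\,\frac{x}{\log x}+o\!\left(\frac{x}{\log x}\right),
\]
where $m\geq 1$ (the finite sum over primes in $S$ is absorbed into the error).

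\medskip

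\noindent Next, I would partition the primes $p\leq x$, $p\notin S$, into two sets:
\[
A_x=\{p\leq x:\,p\notin S,\ |\lambda_G(p)|>c\},\qquad B_x=\{p\leq x:\,p\notin S,\ |\lambda_G(p)|\leq c\}.
\]
On $A_x$ we have $\lambda_G(p)^2>c^2$, so
\[
\sum_{p\leq x,\ p\notin S}\lambda_F(p)^2\lambda_G(p)^2 \;\geq\; c^2\sum_{p\in A_x}\lambda_F(p)^2.
\]
The task is to show $\sum_{p\in A_x}\lambda_F(p)^2$ is comparable to $x/\log x$. Writing $\sum_{p\in A_x}\lambda_F(p)^2=\sum_{p\leq x,\ p\notin S}\lambda_F(p)^2-\sum_{p\in B_x}\lambda_F(p)^2$ and using the Ramanujan bound $\lambda_F(p)^2\leq 16$ on $B_x$ reduces the problem to bounding $|B_x|$ from above.

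\medskip

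\noindent By the prime number theorem $\pi(x)=\frac{x}{\log x}+o(x/\log x)$, and by hypothesis $|A_x|\geq \alpha\,\frac{x}{\log x}$ for sufficiently large $x$ (up to removing the finitely many primes in $S$, which only changes this by $O(1)$). Hence
\[
|B_x|\;\leq\;\pi(x)-|A_x|+O(1)\;\leq\;(1-\alpha)\frac{x}{\log x}+o\!\left(\frac{x}{\log x}\right),
\]
which gives $\sum_{p\in B_x}\lambda_F(p)^2\leq 16(1-\alpha)\frac{x}{\log x}+o(x/\log x)$. Combining,
\[
\sum_{p\leq x,\,p\notin S}\lambda_F(p)^2\lambda_G(p)^2 \;\geq\; c^2\bigl(m-16(1-\alpha)\bigr)\frac{x}{\log x}+o\!\left(\frac{x}{\log x}\right).
\]
The hypothesis $\alpha>15/16$ is exactly what makes $m-16(1-\alpha)\geq 1-16(1-\alpha)>0$, so the leading coefficient is strictly positive. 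Finally, since the inequality is a lower bound, we may shrink the constant if necessary and choose $\beta\in(0,1)$ with $\beta\leq c^2(m-16(1-\alpha))$, completing the proof.

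\medskip

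\noindent The only subtle point is the numerology: the threshold $\alpha>15/16$ is dictated by the Deligne/Ramanujan bound $|\lambda_F(p)|\leq 4$, since on $B_x$ the trivial bound $\lambda_F(p)^2\leq 16$ is what one must beat against the main term of size $m\geq 1$. If the Ramanujan bound were not available, or if the main term $m$ were smaller, this step would fail—so the main (conceptual) obstacle is simply verifying that the constants line up, which is immediate from $16(1-\alpha)<1$.
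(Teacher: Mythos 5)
Your proposal is correct and follows essentially the same argument as the paper: split primes according to whether $|\lambda_G(p)|>c$, bound the complementary set via the prime number theorem and the density hypothesis, use the Ramanujan/Weissauer bound $\lambda_F(p)^2\leq 16$ there, and subtract from the asymptotic $\sum\lambda_F(p)^2=m\,x/\log x+o(x/\log x)$ to get the lower bound $c^2(16\alpha+m-16)\,x/\log x$. The only cosmetic difference is that you derive $|\lambda_F(p)|\leq 4$ directly from the Ramanujan hypothesis rather than citing Weissauer, which is equally valid.
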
  
 \begin{proof} Prime Number Theorem states that $\ds{\# \{ p \leq x  : p \ \text{is prime} \}  = \frac{x}{\log{x}} + o\Big( \frac{x}{\log{x}} \Big)}$. Since removing finitely many primes would not effect the asymptotic behavior, we conclude that $$ \# \{ p \leq x  : p \ \text{is prime and} \ p \not \in S  \}  = \frac{x}{\log{x}} + o\Big( \frac{x}{\log{x}} \Big).$$ Say $S_g(x) = \{ p \leq x : \lvert \lambda_G(p) \rvert > c \}$. The above equation can be written as, 
$$ \# \{ p \leq x : p \not \in S_g(x) \ \text{and} \ p \not \in S \} = \frac{x}{\log{x}} + o\Big( \frac{x}{\log{x}} \Big) - \# \{ p \leq x : p \in S_g(x) \ \text{and} \ p \not \in S \} .$$ Under the assumption, $ \ds{ \# S_g(x) \geq \alpha \frac{x}{\log{x}} }$ we get, 
$$  \# \{ p \leq x : p \not \in S_g \ \text{and} \ p \not \in S \} \leq (1-\alpha) \frac{x}{\log{x}} + o\Big( \frac{x}{\log{x}} \Big) $$for sufficiently large $x$. By Weissauer's bound proved in \cite{MR2498783}, $ \lvert \lambda_F(p) \rvert  \leq 4 $ for all primes $p \not \in S.$
$$ \sum_{ p \leq x, \ p \not \in S_g(x) } \lambda_F(p)^2 \leq 4^2 \# \{ p \leq x: p \not \in S \cup S_g(x) \} \leq  16(1-\alpha) \frac{x}{\log{x}} + o \Big( \frac{x}{\log{x}} \Big). $$Combining it with Corollary $2.3.1$ we get,  
$$ \sum_{p \leq x, \ p \in S_g(x)} \lambda_F(p)^2 \geq (16\alpha+m-16) \frac{x}{\log{x}} + o \Big( \frac{x}{\log{x}} \Big). $$ 
$$ \sum_{ p \leq x, \ p \not \in S} \lambda_F(p)^2 \lambda_G(p)^ 2 \geq  \sum_{ p \leq x, \ p \in S_g(x) } \lambda_F(p)^2 \lambda_G(p)^ 2 \geq c^2(16 \alpha +m-16) \frac{x}{\log{x}} + o \Big( \frac{x}{\log{x}} \Big). $$This proves that \begin{equation}  
 \sum_{ p \leq x, \ p \not \in S} \lambda_F(p)^2 \lambda_G(p)^ 2 \geq   c^2(16 \alpha+m-16) \frac{x}{\log{x}} + o \Big( \frac{x}{\log{x}} \Big). \end{equation} 
\end{proof}
 \begin{lemma} Let $F\in \mr{S}_{k_1}(\Gamma^{(2)}(N_1))$ and $G \in \mr{S}_{k_2}(\Gamma^{(2)}(N_2))$ be two normalised Hecke eigenforms satisfying the Ramanujan conjecture. Assume that if both $F,G$ lift to class \textbf{Y} then $$L(s,\pi_F,\mr{spin}) = L(s,\pi_1)L(s,\pi_2) \ \text{and} \ L(s,\pi_G,\mr{spin}) = L(s,\tau_1)L(s,\tau_2) $$ where $\pi_1, \pi_2, \tau_1$ and $\tau_2$ are all distinct automorphic representation over $\GL_2(\mb{A_Q}).$ Under these assumptions, if $\lambda_F(p)$ and $\lambda_G(p)$ are eigenvalues of $F$ and $G$ respectively then \begin{equation} 
\sum_{p \leq x, \ p \nmid N} \lambda_F(p) \lambda_G(p) = o \Big( \frac{x}{\log{x}} \Big).  \end{equation}
 \end{lemma}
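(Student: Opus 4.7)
The plan is to prove the lemma by case analysis on the Ramanujan classes of $F$ and $G$. By Proposition 2.1 of \cite{MR3766842}, any Hecke eigenform satisfying the Ramanujan conjecture lies in class $\textbf{G}$ or class $\textbf{Y}$, so there are four cases. In every case the strategy is the same: expand $\lambda_F(p)\lambda_G(p)$ as a finite sum of bilinear terms of the form $a_\sigma(p)\, a_{\sigma'}(p)$, where $\sigma, \sigma'$ are cuspidal automorphic representations of $\GL_m(\mb{A_Q})$ with $m \leq 4$; verify that $\sigma \not\cong \sigma'$, check the remaining hypotheses of Lemma 2.3, and sum the resulting $o(x/\log x)$ bounds.

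In the $\textbf{G}$--$\textbf{G}$ case, $\lambda_F(p) = a_\pi(p)$ and $\lambda_G(p) = a_\tau(p)$ for cuspidal $\pi, \tau$ on $\GL_4(\mb{A_Q})$; since $F$ and $G$ are distinct eigenforms their spin L-functions differ, hence $\pi \not\cong \tau$, and Lemma 2.3 applies directly. In the mixed $\textbf{G}$--$\textbf{Y}$ case (with $\textbf{Y}$--$\textbf{G}$ symmetric), one eigenvalue equals $a_\pi(p)$ with $\pi$ on $\GL_4(\mb{A_Q})$ while the other equals $a_{\tau_1}(p) + a_{\tau_2}(p)$ with $\tau_1, \tau_2$ on $\GL_2(\mb{A_Q})$; since cuspidal representations on $\GL_4$ and $\GL_2$ can never be isomorphic, both cross-terms $a_\pi(p)a_{\tau_j}(p)$ satisfy the non-isomorphism hypothesis of Lemma 2.3.

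The $\textbf{Y}$--$\textbf{Y}$ case is precisely where the explicit distinctness assumption of the lemma is used. Writing $\lambda_F(p) = a_{\pi_1}(p) + a_{\pi_2}(p)$ and $\lambda_G(p) = a_{\tau_1}(p) + a_{\tau_2}(p)$ with all four $\GL_2$ representations pairwise non-isomorphic by hypothesis, we expand
$$\lambda_F(p)\lambda_G(p) = \sum_{i,j \in \{1,2\}} a_{\pi_i}(p)\, a_{\tau_j}(p),$$
and Lemma 2.3 applies to each of the four summands.

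Finally, in every application of Lemma 2.3 one must verify that $a_\sigma(p)$ and $a_{\sigma'}(p)$ are uniformly bounded outside a finite set and that the representations have trivial central character. Boundedness follows from the Ramanujan conjecture for $F$ and $G$, which forces every Satake parameter to have absolute value one, hence $\lvert a_\sigma(p) \rvert \leq 4$ for all unramified $p$. Triviality of the central character is inherited from $\pi_F$ and $\pi_G$, which descend to representations of $\mr{PGSp}_4(\mb{A_Q})$ as discussed in Section 1. The mild technical point, and the closest thing to an obstacle, is that the two representations entering Lemma 2.3 may live on $\GL_{m_1}$ and $\GL_{m_2}$ of different ranks; this is already accommodated in the statement of that lemma and in the Rankin--Selberg prime number theorem of \cite{MR2364718} that it invokes. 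Summing the resulting $o(x/\log x)$ bounds over the at most four cross-terms completes the proof.
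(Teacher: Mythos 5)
Your proposal follows essentially the same route as the paper: a case analysis on the classes \textbf{G} and \textbf{Y} of $F$ and $G$, expansion of $\lambda_F(p)\lambda_G(p)$ into at most four bilinear terms, and an application of Lemma 2.3 to each term, with the distinctness hypothesis used exactly to exclude a shared $\GL_2$ factor in the \textbf{Y}--\textbf{Y} case. Your explicit verification of the boundedness and non-isomorphism hypotheses of Lemma 2.3 (including the cross-rank $\GL_4$ versus $\GL_2$ case) is slightly more careful than the paper's write-up but does not change the argument.
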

 \begin{proof}
 Let $\pi_F$, $\pi_G$ be automorphic representations associated with $F$ and $G$ respectively. There are $4$ different possibilities for their L-functions depending on the class of lifts. \begin{enumerate} 
\item $L(s,\pi_F,\mr{spin}) = L(s,\pi_1)$ and $L(s,\pi_G,\mr{spin}) = L(s,\pi_2)$ such that $\pi_1,\pi_2$ are distinct self dual, unitary cuspidal automorphic representations of $\GL_4(\mb{A_Q}).$  
\item $L(s,\pi_F,\mr{spin}) = L(s,\pi)$ such that $\pi$ is self dual, unitary cuspidal automorphic representations of $\GL_4(\mb{A_Q})$. $L(s,\pi_G,\mr{spin})= L(s,\tau_1)L(s,\tau_2)$ where $\tau_1,\tau_2$ are distinct self dual cuspidal automorphic representations of $\GL_2(\mb{A_Q}).$ 
\item $L(s,\pi_F,\mr{spin})= L(s,\pi_1)L(s,\pi_2)$ and $L(s,\pi_G,\mr{spin})= L(s,\tau_1)L(s,\tau_2)$ where $\pi_1,\pi_2,\tau_1$ and $\tau_2$ are distinct self dual cuspidal automorphic representations of $\GL_2(\mb{A_Q}).$
\item $L(s,\pi_F,\mr{spin})= L(s,\pi)L(s,\tau_1)$ and $L(s,\pi_G,\mr{spin})= L(s,\pi)L(s,\tau_2)$ where $\pi$ is self dual cuspidal automorphic representations of $\GL_2(\mb{A_Q})$. $\tau_1$ and $\tau_2$ are distinct self dual cuspidal automorphic representations of $\GL_2(\mb{A_Q}).$
\end{enumerate} Assumptions imply that the fourth case is not possible. Hence,  \begin{enumerate}
     \item $\lambda_F(p) = a_{\pi_1}(p)$ and $\lambda_G(p) = a_{\pi_2}(p)$. Observe that $a_{\pi_1}$ and $a_{\pi_2}$ satisfy the conditions for Lemma $2.3.$ Hence, 
     $$ \sum_{p \nmid N, p \leq x} \lambda_F(p)\lambda_G(p) = \sum_{p \nmid N, p \leq x} a_{\pi_1}(p)a_{\pi_2}(p) = o \Big( \frac{x}{\log{x}} \Big).$$ 
     \item If $\lambda_F(p) = a_{\pi}(p)$ and $\lambda_G(p) = a_{\tau_1}(p)+a_{\tau_2}(p),$ then 
     $$ \sum_{p \nmid N, p \leq x} \lambda_F(p)\lambda_G(p) = \sum_{p \nmid N, p \leq x} a_{\pi}(p)a_{\tau_1}(p)+\sum_{p \nmid N, p \leq x} a_{\pi}(p)a_{\tau_2}(p) = o \Big( \frac{x}{\log{x}} \Big).$$
     \item If $\lambda_F(p) = a_{\pi_1}(p)+a_{\pi_2}(p).$ and $\lambda_G(p) = a_{\tau_1}(p)+a_{\tau_2}(p),$ then $ \sum_{p \nmid N, p \leq x} \lambda_F(p)\lambda_G(p) $
     $$= \sum_{p \nmid N, p \leq x} a_{\pi_1}(p)a_{\tau_1}(p)+\sum_{p \nmid N, p \leq x} a_{\pi_1}(p)a_{\tau_2}(p) +\sum_{p \nmid N, p \leq x} a_{\pi_2}(p)a_{\tau_1}(p)+\sum_{p \nmid N, p \leq x} a_{\pi_2}(p)a_{\tau_2}(p) $$ $$ = o \Big( \frac{x}{\log{x}} \Big) .$$
     
 \end{enumerate}
 \end{proof}

\begin{theorem} Let $F,G$ be two cusp forms satisfying the conditions of Lemma $3.2.$ Assume there exists a $c \in (0,4)$ and $\alpha > \frac{15}{16} $ such that 
$$\# \{ p \leq x : \lvert \lambda_{G}(p) \rvert > c \} \geq \alpha \frac{x}{\log{x}}$$ for sufficiently large $x$. Then, the set of primes $ \{ p : \lambda_{F}(p) \lambda_{G}(p) < 0 \}$, has positive density.
\end{theorem}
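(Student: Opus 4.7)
The plan is to exploit the two lemmas just proved: Lemma 3.1 gives a uniform lower bound of order $x/\log x$ for $\sum_{p \le x,\ p \notin S} \lambda_F(p)^2\lambda_G(p)^2$, while Lemma 3.2 shows that $\sum_{p \le x,\ p \nmid N} \lambda_F(p)\lambda_G(p)$ is $o(x/\log x)$. These two facts, together with Weissauer's bound $|\lambda_F(p)|,|\lambda_G(p)| \le 4$, force a significant portion of the contribution to the quartic sum to come from primes where $\lambda_F(p)\lambda_G(p)$ is negative, because otherwise the linear sum could not cancel.

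Concretely, I would split the primes (outside the finite exceptional set $S$) into
\[
P^+(x) = \{p \le x : \lambda_F(p)\lambda_G(p) \ge 0\}, \qquad P^-(x) = \{p \le x : \lambda_F(p)\lambda_G(p) < 0\},
\]
and consider $\sum |\lambda_F(p)\lambda_G(p)|$ together with $\sum \lambda_F(p)\lambda_G(p)$. Using the bound $|\lambda_F(p)\lambda_G(p)| \le 16$ one has $\lambda_F(p)^2\lambda_G(p)^2 \le 16\,|\lambda_F(p)\lambda_G(p)|$, so Lemma 3.1 gives
\[
\sum_{p \le x,\, p \notin S} |\lambda_F(p)\lambda_G(p)| \;\ge\; \tfrac{1}{16}\,c^2(16\alpha + m - 16)\,\tfrac{x}{\log x} + o\!\left(\tfrac{x}{\log x}\right).
\]
From the identities
\[
\sum |\lambda_F\lambda_G| = \sum_{P^+} \lambda_F\lambda_G - \sum_{P^-} \lambda_F\lambda_G, \qquad \sum \lambda_F\lambda_G = \sum_{P^+} \lambda_F\lambda_G + \sum_{P^-} \lambda_F\lambda_G,
\]
subtracting yields $\sum_{P^-(x)} |\lambda_F(p)\lambda_G(p)| = \tfrac12\bigl(\sum |\lambda_F\lambda_G| - \sum \lambda_F\lambda_G\bigr)$. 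Lemma 3.2 makes the second term negligible, so combining with the previous display gives
\[
\sum_{p \in P^-(x)} |\lambda_F(p)\lambda_G(p)| \;\ge\; \tfrac{c^2(16\alpha + m - 16)}{32}\,\tfrac{x}{\log x} + o\!\left(\tfrac{x}{\log x}\right).
\]
Finally, applying $|\lambda_F(p)\lambda_G(p)| \le 16$ in the opposite direction on this very sum yields
\[
\#P^-(x) \;\ge\; \tfrac{c^2(16\alpha + m - 16)}{512}\,\tfrac{x}{\log x} + o\!\left(\tfrac{x}{\log x}\right),
\]
and the key point is that the hypothesis $\alpha > 15/16$ is exactly what guarantees $16\alpha + m - 16 > 0$ in the worst case $m = 1$ (class \textbf{G}); in the Yoshida case $m = 2$ the same inequality is even easier. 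Dividing by $x/\log x$ and invoking the prime number theorem gives the desired positive lower density for $\{p : \lambda_F(p)\lambda_G(p) < 0\}$.

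The main conceptual obstacle is already absorbed into Lemmas 3.1 and 3.2 (the Rankin--Selberg input from \cite{MR2364718} controlling cross-terms of Dirichlet coefficients and the Weissauer bound controlling the complementary tail), so the remaining step is essentially bookkeeping. The only subtlety is ensuring the exceptional set $S$ of ramified primes and any contribution coming from $S_g(x)$ versus its complement are handled consistently with Lemma 3.1, which is already framed in exactly that way. Thus the structure of the argument is linear: lower bound the fourth-power sum via Lemma 3.1, translate to an absolute-value sum via $|\lambda_F\lambda_G| \le 16$, use Lemma 3.2 to convert to a sum over $P^-(x)$, and conclude with a final application of the same uniform bound to pass from mass to counting.
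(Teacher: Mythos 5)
Your proposal is correct and follows essentially the same route as the paper: both arguments combine the lower bound from Lemma 3.1, the cancellation from Lemma 3.2, and the Weissauer bound $\lvert \lambda_F(p)\lambda_G(p)\rvert \le 16$ to show the negative-product primes must carry a positive proportion of the mass, arriving at the identical constant $c^2(16\alpha+m-16)/512$. The only difference is bookkeeping — you pass through $\sum \lvert\lambda_F\lambda_G\rvert$ and the $P^{\pm}$ identities, while the paper works directly with the weighted sum $\sum(\lambda_F^2\lambda_G^2 - 16\lambda_F\lambda_G)$ — and both are sound.
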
 
\begin{proof}Let $S = \{ p : p \nmid N \}$. Consider the sum 
$$S^{-}(x) =  \sum_{p \leq x, \ p \not \in S} ( \lambda_F(p)^2\lambda_G(p)^2 - 16\lambda_F(p)\lambda_G(p)) =  \sum_{p \leq x, \ p \not \in S}  \lambda_F(p)\lambda_G(p)[\lambda_F(p)\lambda_G(p)-16] .$$ For $p \not \in S$, $\lvert \lambda_F(p) \lambda_G(p) \rvert \leq 16.$ Hence, for $p$ such that $\lambda_F(p)\lambda_G(p) > 0$, $\lambda_F(p)\lambda_G(p) - 16 < 0$. Therefore, $$S^{-}(x)   \leq  \sum_{p \leq x,\not \in S \ \lambda_F(p)\lambda_G(p) < 0} ( \lambda_F(p)^2\lambda_G(p)^2 - 16\lambda_F(p)\lambda_G(p)) $$ 
$$ \leq 512. \# \{ p \leq x  : p \not \in S \ \text{and} \ \lambda_F(p)\lambda_G(p) < 0 \}.$$ From Lemma $3.1$ and $3.2$, we conclude that 
$$ S^{-}(x) =  \sum_{p \leq x, \ p\not \in S} ( \lambda_F(p)^2\lambda_G(p)^2 - 16\lambda_F(p)\lambda_G(p)) \geq c^2(16 \alpha+m-16) \frac{x}{\log{x}} + o \Big( \frac{x}{\log{x}} \Big).$$ Combining the inequalities, 
$$  \# \{ p \leq x : p \not \in S \ \text{and} \  \lambda_F(p)\lambda_G(p) < 0 \} \geq \frac{c^2(16 \alpha+m-16)}{512}  \frac{x}{\log{x}} + o \Big( \frac{x}{\log{x}} \Big) .$$ Since $S$ contains finitely many primes, we can add them to the set to conclude  $$ \# \{ p \leq x : \lambda_F(p)\lambda_G(p) < 0 \} \geq \frac{c^2(16 \alpha+m-16)}{512} \frac{x}{\log{x}} + o\Big( \frac{x}{\log{x}} \Big).$$
Hence for $\alpha > \frac{15}{16}$ the set of primes $\{ p : \lambda_F(p)\lambda_G(p) < 0 \}$ has positive density. \end{proof}

\end{document}